\newtheorem{theorem}{Theorem}
\newtheorem{proposition}[theorem]{Proposition}
\newcommand{\sH}{\mathcal{H}}
\newcommand{\bbS}{\mathbb{S}}
\newcommand{\conv}{\mathrm{conv}}
\renewcommand{\Re}{\mathbb{R}}
\newcommand{\sB}{\mathcal{B}}
\newcommand{\td}{\widetilde{\delta}}
\newcommand{\deltaneg}{\delta_{\mbox{\footnotesize{neg}}}}
\newcommand{\tdelta}{\widetilde{\delta}}
\newcommand{\tentative}[1]{}
\DeclarePairedDelimiterX{\setnorm}[1]
  {\lvert\mkern-2mu\lvert\mkern-2mu\lvert}{\rvert\mkern-2mu\rvert\mkern-2mu\rvert}{#1}
\title{Linear and Sublinear Diversities}
\author{David Bryant and Paul Tupper}
\begin{document}
\maketitle

\begin{abstract}
Diversities are an extension of the concept of a metric space which assign a non-negative value to every finite set of points, rather than just pairs. A general theory of diversities has been developed which exhibits many deep analogies to metric space theory but  also veers off in  new directions. Just as many of the most important aspects of metric space theory involve metrics defined on $\Re^k$, many applications of diversity theory require a specialized theory for diversities defined on $\Re^k$, as we develop here. We focus on two fundamental classes of diversities defined on $\Re^k$: those that are  Minkowski linear and those that are Minkowski sublinear. 
Many well-known functions in convex analysis belong to these classes, including  diameter, circumradius and  mean width. We derive surprising characterizations of these classes, and establish elegant connections between them. Motivated by classical results in metric geometry, and connections with combinatorial optimization, we then examine embeddability of finite diversities into $\Re^k$. We prove that a finite diversity can be embedded into a linear diversity exactly when it is of negative type and that it can be embedded into a sublinear diversity exactly when it corresponds to a generalized circumradius.
\end{abstract}

\section{Introduction}

We start with a brief introduction to the theory of diversities, state our main results, and then give an overview of the paper.

A diversity \cite{BryantTupper12} is a pair $(X,\delta)$ where $X$ is a set and $\delta$ is a non-negative function defined on finite subsets of $X$ satisfying
\begin{quotation}
\noindent  (D1) $\delta(A)\geq 0$ and $\delta(A)=0$ if and only if $|A|\leq 1$,\\
(D2) $\delta(A \cup C) \leq \delta(A \cup B) + \delta(B \cup C)$ whenever $B \neq \emptyset$.
\end{quotation}
\noindent As such, diversities are  set-based analogues of metric spaces. 
In fact, for a diversity $(X,\delta)$, if we let $d(x,y)=\delta(\{x,y\})$, then $(X,d)$ is a metric space, called the {\em induced metric space} \cite{BryantTupper12}.

Properties (D1) and (D2) are equivalent \cite{BryantNiesEtal21} to (D1) together with monotonicity
\begin{quotation}\noindent (D3) $\delta(A) \leq \delta(B)$ whenever $A \subseteq B$ \end{quotation}
and subadditivity on intersecting sets
\begin{quotation}
\noindent (D4) $\delta(A \cup B) \leq \delta(A)+ \delta(B)$ when $A \cap B \neq \emptyset$.
\end{quotation}
\noindent We say $(X,\delta)$ is a {\em semidiversity} if (D1) is relaxed to 
\begin{quotation}\noindent  (D1$'$) $\delta(A)\geq 0$ and $\delta(A)=0$ if $|A|\leq 1$.\end{quotation}
That is, sets with two or more points may have zero diversity. This terminology is analogous to at least some of the definitions of semimetrics. 

Many well-known set functions are diversities: for any metric space the diameter of a set, the length of a connecting Steiner tree \cite{BryantTupper12}, and the length of a minimal traveling salesperson tour \cite{BryantTupper14} are diversities; within $\Re^k$ the circumradius \cite{bryant2023diversities}, the mean width \cite{BryantTupper14}, and the length of a smallest enclosing zonotope (see Proposition 2) are diversities. Two well-known set functions which fail to be diversities are 
{\em average distance}
\[
\pi(A)
  = \frac{1}{\binom{|A|}{2}}
    \sum_{\{a,b\}\subset A} d(a,b).
\]
which is not monotonic (D3), and volume of convex hull, which fails (D4).

There are broad classes of diversities just like there are broad classes of metrics. The $\ell_1$ metrics on $\Re^k$
have the form 
\[d_1(a,b) = \sum_{i=1}^k |a_i - b_i|,\]
while {\em $\ell_1$ diversities} on $\Re^k$ \cite{BryantTupper14} have the form
\[\delta_1(A) = \sum_{i=1}^k \max_{a,b \in A} |a_i - b_i|.\]

For a finite set $X$, the class of (semi)-metrics of  negative type  is defined by the condition that  
$\sum_{a,b} x_a x_b \,d(a,b) \leq 0$
for all zero-sum vectors $x$. It includes $\ell_1$ metrics as a special case. In a similar way, for a finite set $X$, the class of semidiversities of negative type is defined by the condition that
\begin{equation}
     \sum_{A,B \subseteq X} x_A x_B\, \delta(A \cup B) \leq 0  \label{eq:negtype}
\end{equation}
for all zero-sum vectors $x$ with $x_\emptyset=0$. It includes $\ell_1$ diversities as a special case.  Metrics of negative type and diversities of negative type have multiple  characterizations \cite{DezaLaurent97,WuBryantEtal19}. We note that \cite{WuBryantEtal19} only consider {\em diversities} of negative type, whereas we also consider {\em semidiversities}. We shall see (Proposition~\ref{prop:negtype_properties}) that the conversion from one to the other is fairly automatic.

A {\em Minkowski diversity} $(\Re^k,\delta_K)$ is determined by a compact convex set $K \subseteq \Re^k$ with non-empty interior, known as the kernel \cite{bryant2023diversities}. The diversity $\delta_K(A)$ of a finite subset $A \subseteq \Re^k$ is the minimum amount that $K$ needs to be scaled so that a translate of $K$ covers $A$, that is, the generalized circumradius of $A$ with respect to $K$. \\

Our focus here is on the intersection of diversity theory, geometry and convex analysis. 
Recall that the Minkowski sum of two subsets $A, B \subseteq \Re^k$ is given by
\(
A+ B = \{ a + b : a \in A, b \in B\}
\)
and the scalar multiple of a set $A$ by $\lambda \in \Re$ is given by $\lambda A = \{ \lambda a : a \in A\}$.
For $b \in \Re^k$ we write  $A+b$ as a shorthand for $A + \{b\}$. A semidiversity $(\Re^k,\delta)$ is {\em Minkowski sublinear} if it satisfies 
\begin{quotation}
\noindent (D5) $\delta(\lambda A)  = \lambda \delta(A) \mbox{ and } \delta(A+B)  \leq \delta(A) + \delta(B)$
\end{quotation}
for $\lambda \geq  0$ and all nonempty finite subsets $A,B \subseteq \Re^k$. It is {\em Minkowski linear} if it also satisfies 
\begin{quotation}
\noindent (D6) $ \delta(A+B) = \delta(A) + \delta(B)$
\end{quotation}
for all nonempty finite subsets  $A,B \subseteq \Re^k$.
Many  familiar diversities defined on $\Re^k$ are Minkowski linear or sublinear (see below). We explore their properties and characterization. From here on we will use the terminology linear or sublinear as shorthand for Minkowski linear or sublinear respectively.

As per usual, we make repeated use of support functions when dealing with convex bodies and functions defined on them. The {\em support function}  of a nonempty bounded set $A$ is defined 
\[h_A:\Re^k \to \Re:x \mapsto \sup\{a\cdot x:a \in A\}.\]
Here $a \cdot x$ denotes  
the usual scalar product in $\Re^k$,
\[a \cdot x =\sum_{i=1}^k a_i x_i.\]
We make use of the following  properties of the support function, see \cite[Chapter 1]{Schneider14} for further details. 
    \begin{enumerate}
        \item A  nonempty bounded set has the same support function as both its closure and convex hull.
        \item $h_{A+B} = h_A + h_B$ and $h_{\lambda A} = \lambda h_A$ for non-empty, bounded $A,B$ and $\lambda \geq 0.$
        \item If $A,B$ are nonempty convex compact sets then $A \subseteq B$ if and only if $h_A(x) \leq h_B(x)$ for all $x \in \Re^k$.
     \end{enumerate}

    We often consider support functions restricted to $\bbS^{k-1}$, the unit sphere in $\Re^k$, noting that a support function is determined everywhere by its values on $\bbS^{k-1}$.
We note that the support function restricted to  $\bbS^{k-1}$ is bounded if and only if the set is bounded.

Our main results for diversities and semidiversities $(\Re^k,\delta)$ are:
\begin{enumerate}
\item (Theorem~\ref{thm:linear_characterize}) Linear diversities and semidiversities are exactly those which can be written in the form
\[ \delta(A) = \int_{\bbS^{k-1}} h_A(x) \mathrm{d}\nu(x) \]
for a finite positive Borel measure $\nu$ on the sphere $\bbS^{k-1}$ satisfying 
\begin{equation} 
    \int_{\bbS^{k-1}} x \, \mathrm{d}\nu(x) = 0.
\end{equation} 
\item (Theorem~\ref{thm:sublinearSup}) A diversity or semidiversity is sublinear if and only if it is the maximum of linear semidiversities (just like a function is convex if and only if it is the maximum of affine functions). 
\end{enumerate}

We then shift to studying the embeddings of finite diversities (that is,  diversities $(X,\delta)$ with $X$ finite) into linear or sublinear diversities.   
We say that (semi)diversity $(X_1,\delta_1)$ can be embedded in (semi)diversity $(X_2,\delta_2)$ if there is a map $f \colon X_1 \rightarrow X_2$ such that $\delta_2(f(A)) = \delta_1(A)$ for all finite $A \subseteq X_1$. If the source $(X_1,\delta_1)$ is a diversity, then the embedding $f$ is automatically injective by $(D1)$, but in general we can embed semidiversities into diversities  with non-injective $f$.
Questions regarding embeddings and approximate embeddings of finite metrics in normed spaces are central to metric geometry and its applications. Consider, for example, Menger's characterizations of when a metric can be embedded in Euclidean space, or the extensive literature applying metric embeddings to combinatorial optimization (reviewed in \cite{DezaLaurent97,matousek2013lectures,indyk20178}). 

For finite diversities $(X,\delta)$ we show:
\begin{enumerate}
\item (Theorem~\ref{thm:linearEmbed}) A finite semidiversity can be embedded in a linear diversity if and only if it can be embedded in a Minkowski diversity with simplex kernel (see Section~\ref{subsec:examples_of_lin_and_sublin} below) if and only if it is of negative type.
\item (Theorem~\ref{thm:sublinearEmbed}) A finite semidiversity  can be embedded in a sublinear diversity if and only if it can be embedded in a Minkowski diversity 
(see Section~\ref{subsec:examples_of_lin_and_sublin} below) if and only if 
it is the maximum of a finite collection of  semidiversities of negative type  on $X$.
\end{enumerate}

That the characterization of linear embeddable diversities came out as tidy as Theorem~\ref{thm:linearEmbed} is surprising. Theorem~\ref{thm:linearEmbed} can, in many ways, be seen as the analogue of the celebrated and useful characterization of  metrics of negative type as squares of Euclidean metrics. However the relationship between the metric and diversity  characterizations is not as straightforward as it might first appear. The metric result is not  a special case of the diversity result, in fact there are many metrics of negative type which are not the induced metrics of a diversity of negative type \cite{WuBryantEtal19}. 

These results follow a general pattern which we have seen multiple times in the development of diversity theory. Sometimes the theory of diversities  runs  in parallel with that of metric spaces; other times it veers off in new directions. In the first paper on diversities \cite{BryantTupper12} the authors explored how concepts of hyperconvexity, injectivity and the tight span extended, and to an extent enriched, the analogous metric concepts.  
Diversities turn out to be an exemplary class of metric structures, exhibiting fascinating connections with model theory and Urysohn's universal space \cite{BryantNiesEtal17,BryantNiesEtal21,Hallback20,hallback2020automorphism}. Other directions that have been pursued are a diversity analogue of ultrametric and normed spaces \cite{HaghmaramNourouzi20,MehrabaniNourouzi20,haghmaram2022diversity} and new diversity-based results in fixed point theory \cite{espinola2014diversities,Piatek14}.

There are many reasons to believe that diversities are to hypergraphs what metrics are to graphs. The main contribution of  \cite{BryantTupper14} is to show that the `geometry of graphs' linking metric embeddings to approximation algorithms on graphs  \cite{LinialLondonEtal95}  has a parallel `geometry of hypergraphs' linking diversity embeddings to approximation algorithms on hypergraphs. Jozefiak and Shepherd \cite{jozefiak2023diversity} use this approach to obtain the best known approximation algorithms for several hypergraph optimization problems. 

The approximation bounds achieved by \cite{LinialLondonEtal95} have  been superseded by much tighter bound based on  approximate embeddings of metrics of negative type. These tighter bounds motivate the study of negative type diversities \cite{WuBryantEtal19}. A geometric approach, as with metrics, has the potential to expand the metric embedding toolkit from graphs to hypergraphs. However diversity theory is new, and the fundamentals of the geometry of diversities are still being worked out. We see the results here as a contribution towards that endeavor. \\

The structure of the paper is as follows.
In Section~\ref{sec:linear_and_sublinear} we establish the basic properties and characterizations of linear and sublinear diversities.
Subsection \ref{subsec:examples_of_lin_and_sublin} gives examples of several linear and sublinear diversities, including defining Minkowski diversity. Subsection~\ref{subsec:properties_of_linear_and_sublin} establishes
many basic properties of  sublinear semidiversities that we need later for our main results.
Subsection~\ref{subsec:linear_characterization} gives Theorem~\ref{thm:linear_characterize}, a complete characterization of linear semidiversities.   Subsection~\ref{subsec:char_sublinear} gives Theorem~\ref{thm:sublinearSup}, our characterization of sublinear semidiversities. In Section~\ref{sec:embedding} we switch to studying when finite diversities can be embedded in either linear or sublinear diversities. 
There we prove Theorem~\ref{thm:linearEmbed}, a characterization of when a semidiversity is linear embeddable, and Theorem~\ref{thm:sublinearEmbed}, a characterization of when a semidiversity is sublinear embeddable.

\section{Linear and sublinear diversities} \label{sec:linear_and_sublinear}

In this section we establish basic properties and characterizations for linear and sublinear diversities. 

\subsection{Examples of Linear and Sublinear Diversities} \label{subsec:examples_of_lin_and_sublin}

We start with examples of diversities which are linear or sublinear. Note that for all diversities $(X,\delta)$ we have $\delta(\emptyset) = 0$, even if that is not stated explicitly below in the definitions. 

\begin{enumerate}
\item    Let $\| \cdot \|$ be any norm on $\Re^k$. The {\em diameter diversity} (for this particular space) is given by
    \[\delta(A) = \max_{a,b \in A} \|a-b\|\]
for  finite $A \subseteq \Re^k$. The diameter diversity on normed spaces is sublinear \cite[p. 49]{Schneider14}.
\item The $\ell_1$ diversity $(\Re^k,\delta_1)$ is 
\[
\delta_1(A) = \sum_{i=1}^k \max_{a,b \in A} |a_i - b_i| 
\]
for  finite $A\subseteq \Re^k$ \cite{BryantTupper14}. For finite $A,B$ and $\lambda \geq 0$ we have (noting $\max_{a,b \in A} |a_i - b_i| =\max_{a,b \in A} (a_i - b_i) $ )
\begin{align*}
\delta_1(\lambda A+B) & = \sum_{i=1}^k \max\left\{ \left((\lambda a+b)_i - (\lambda a' + b')_i\right):   a,a' \in A, \, b,b' \in B \right\} \\
& = \sum_{i=1}^k  \max\{\lambda(a_i - a_i') + (b_i - b_i') :  a,a' \in A, \, b,b' \in B \} \\
& =  \sum_{i=1}^k \max\{\lambda(a_i - a_i') : a,a' \in A \} +  \sum_{i=1}^k \max \{ (b_i - b_i') :   b,b' \in B \}  \\
& =\lambda \delta_1(A) + \delta_1(B).
\end{align*}
so $(\Re^k,\delta_1)$ is a linear diversity.
\item The {\em circumradius diversity} of finite $A \subseteq \Re^k$ 
with respect to the unit ball $\sB$ is  defined to be
\[\delta_\sB(A) = \inf\{\lambda  \geq 0: A \subseteq \lambda \sB + x \mbox{ for some $x \in \Re^k$} \}.
\]
More generally, we define the {\em Minkowski diversity} $(\Re^k,\delta_K)$ with kernel $K$ (also known as the generalized circumradius) to be  
\begin{equation} \label{eq:minkowski_defn}
\delta_K(A) = \inf \{\lambda \geq 0 : A \subseteq  \lambda K +x \mbox{ for some } x \in \Re^k\},
\end{equation}
for finite $A \subseteq \Re^k$. We assume that $K$ is compact and convex with non-empty interior.  Minkowski diversities (including the circumradius) are sublinear \cite{bryant2023diversities} but are not, in general, linear. For example, consider the circumradius diversity  $(\Re^2,\delta_\sB)$. If $A = \{(0,0),(1,0)\}$ and $B = \{(0,0),(0,1)\}$ then $\delta_\sB(A) = \delta_\sB(B) = 1/2$ but $\delta_{\sB}(A+B) = \frac{1}{\sqrt{2}}$, so $\delta_\sB(A+B) < \delta_\sB(A) + \delta_\sB(B)$. 
\item The {\em mean width diversity} $(\Re^k,\delta_w)$ \cite{BryantTupper14} is 
\[\delta_w(A) = 
\frac{2}{\alpha_k} \int_{\bbS^{k-1}} h_A(x) \, \mathrm{d} \nu(x) \]
where $\nu(x)$ is the (uniform) Haar measure on the sphere and 
$\alpha_k$ is chosen so that $\delta_w(\{a,b\}) = \|a-b\|_2$ for all $a,b \in \Re^k$.
The value $\delta_w(A)$ is proportional to the mean width of the convex hull of $A$.  Mean width diversities are linear \cite[p. 50]{Schneider14} and 
their induced metric is the Euclidean metric. 

Let $w_A(x) = \max\{x \cdot (a-b):a,b\in A\}$ denote the width of $A$ in direction $x$, so $w_A(x) = h_{A-A}(x) = h_A(x) + h_A(-x)$. Then 
\[ \delta_w(A) = \frac{1}{\alpha_k} \int_{\bbS^{k-1}} w_A(x) \, \mathrm{d} \nu(x). \]
For $1 \leq p < \infty$ we define
\[ \delta^{(p)}_w(A) = \frac{1}{\alpha_k} \left[\int_{\bbS^{k-1}} |w_A(x)|^p \, \mathrm{d} \nu(x) \right]^{1/p}. \]
That this is a sublinear diversity follows from the Minkowski inequality. See \cite[Prop.~2.4]{HaghmaramNourouzi20}, or \cite[Prop.~10]{BryantCioica-LichtEtal21} in the case that $p=2$. 
\item A {\em zonotope} $Z$ is a Minkowski sum of line segments. By linearity, the mean width diversity of a zonotope equals the sum of lengths of the line segments it is formed from (see \cite{joos2023isoperimetric}). This sum is called the {\em length} $\ell(Z)$ of the zonotope, and is well-defined even though a zonotope can be expressed as the sum of multiple different collections of line segments. 

We define the {\em zonotope diversity} $(\Re^k,\delta_z)$, where $\delta_z(A)$ is the minimum length of a zonotope containing $A$, and show that it is a sublinear diversity in Proposition~\ref{prop:zonotope}. Algorithms for computing the length of the minimum enclosing zonotope can be found in \cite{guibas2003zonotopes}.
\end{enumerate}

\subsection{Properties of linear and sublinear diversities} \label{subsec:properties_of_linear_and_sublin}

We establish some basic properties of sublinear diversities (and hence of linear diversities). This includes  the continuous extension of sublinear diversities from finite sets to bounded sets. 

\begin{proposition} \label{prop:sublinearProperties}
    Let $\delta$ be a function on finite subsets of $\Re^k$ which satisfies \textnormal{(D1)}, monotonicity \textnormal{(D3)}  and sublinearity \textnormal{(D5)}. Then the following hold:
    \begin{enumerate}
        \item  $\delta$ is translation invariant: $\delta(A + x) = \delta(A)$ for all finite $A \subseteq \Re^k$ and $x \in \Re^k$.
                \item $(\Re^k,\delta)$ is a diversity.
        \item  If $\conv(A) = \conv(B)$ then $\delta(A) = \delta(B)$.
        \item If $A \subseteq \conv(B)$ then $\delta(A) \leq \delta(B)$.
        \item The map $N:\Re^k \rightarrow \Re$ given by $N(x) = \delta(\{0,x\})$ is a norm on $\Re^k$ such that $\delta(\{x,y\}) = N(x-y)$ for all $x,y \in \Re^k$.
        \item For all finite $A \subseteq \Re^k$ with $|A| > 2$ we have
        \begin{align*}\delta(A) \leq \mbox{ $\frac{|A|-1}{|A|(|A|-2)}$}\sum_{a \in A} \delta(A \setminus \{a\}). \end{align*}
    \end{enumerate}
    If $\delta$ satisfies (D1$'$) rather than (D1) then  1.~through 6.~still hold except that $(\Re^k,\delta)$ is a semidiversity and $N$ is a seminorm.  
\end{proposition}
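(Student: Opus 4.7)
My plan is to prove the five parts in order, since each one relies on its predecessors.

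For (1), I would apply sublinearity to $A + \{x\}$: since $\delta(\{x\}) = 0$, this yields $\delta(A+x) \le \delta(A)$, and the reverse direction follows by rewriting $A = (A+x) + \{-x\}$. For (2), the main step is to establish D4 (subadditivity on intersecting sets), from which D2 follows by inserting a common point $b \in B$ and invoking monotonicity. To prove D4 for $p \in A \cap B$, I would translate so that $p = 0$, observe that $A \cup B \subseteq A + B$ (because $0$ is then in both sets), and apply monotonicity and sublinearity. For (3) I plan to show $\delta(A \cup \{b\}) = \delta(A)$ for every $b \in \conv(A)$ and then iterate over $b \in B \setminus A$. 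Writing $b = \sum_i \mu_i a_i$ as a convex combination of points of $A$, the Minkowski combination $\sum_i \mu_i A$ simultaneously contains every $a \in A$ (take the same $a$ in every summand) and $b$ (take $a_i$ in the $i$th summand); sublinearity then gives $\delta(A \cup \{b\}) \le \delta(A)$, and monotonicity supplies the reverse. Part (4) is bookkeeping: definiteness from (D1), positive homogeneity from (D6), negative homogeneity from (1) applied to $\{0,\lambda x\} = \lambda x + \{0,-\lambda x\}$, and the triangle inequality from $\{0,x+y\} \subseteq \{0,x\} + \{0,y\}$.

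The hard part will be (5). My strategy is to exhibit an explicit Minkowski-convex combination of the $A_a$'s whose convex hull contains $A$ after translation, then use (3) to convert convex-hull containment into a $\delta$-inequality. By (1) I may assume $\sum_{a \in A} a = 0$, which makes the centroid of $A_a$ equal to $c_a = -a/(n-1)$, where $n = |A|$. For each $a^* \in A$, selecting the summand $a^* \in \conv(A_a)$ for every $a \ne a^*$ (valid since $a^* \in A_a$) and the summand $c_{a^*} \in \conv(A_{a^*})$ yields
\[
(n-1)\,a^* + c_{a^*} \;=\; \frac{n(n-2)}{n-1}\,a^* \;\in\; \sum_{a \in A}\conv(A_a).
\]
Using the standard identity $\sum_a \conv(A_a) = \conv\!\bigl(\sum_a A_a\bigr)$, I conclude $A \subseteq \tfrac{n-1}{n(n-2)}\conv(M)$, where $M = \sum_{a \in A} A_a$ is a finite Minkowski sum of finite sets. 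Consequently the finite set $A \cup \tfrac{n-1}{n(n-2)} M$ has the same convex hull as $\tfrac{n-1}{n(n-2)} M$, so by (3) they have equal $\delta$-value; monotonicity together with the scalar and additive parts of (D6) then give
\[
\delta(A) \;\le\; \delta\!\left(\tfrac{n-1}{n(n-2)} M\right) \;=\; \tfrac{n-1}{n(n-2)}\,\delta(M) \;\le\; \tfrac{n-1}{n(n-2)} \sum_{a \in A}\delta(A_a),
\]
as required. The semidiversity case needs no change of argument: the only use of the ``only if'' clause of (D1) is in showing that $N$ is a genuine norm rather than a seminorm in (4).
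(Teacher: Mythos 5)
Your proposal is correct and follows essentially the same route as the paper's proof: identical arguments for translation invariance, for (D4) via $A\cup B\subseteq A+B$ after translating a common point to the origin, for the norm properties, and for part 5 via the centroid identity $-\tfrac{1}{n-1}a\in\conv(A\setminus\{a\})$ and the containment $A\subseteq\tfrac{n-1}{n(n-2)}\sum_{a}\conv(A\setminus\{a\})$. The only divergence is part 3, where the paper simply cites an external reference while you supply a short self-contained argument via $A\cup\{b\}\subseteq\sum_i\mu_i A$; that argument is valid.
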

\begin{proof}
\begin{enumerate}
\item 
By sublinearity (D5) and (D1), we have 
\begin{align*}
\delta(A+x) &\leq \delta(A) + \delta(\{x\}) = \delta(A), \mbox{ and } \\ 
\delta(A)& \leq \delta(A+x) + \delta(\{-x\}) = \delta(A+x).
\end{align*}
\item As $\delta$ is monotonic and $\delta(\emptyset) = 0$, $\delta$ is non-negative, and by part 1., $\delta$ is  translation invariant. We show that $(\Re^k,\delta)$ satisfies (D4). Suppose that $x \in A \cap B$. Then $0 \in (A-x) \cap (B-x)$ and so $(A-x) \cup (B-x) \subseteq (A-x)+(B-x)$ and
\begin{align*} \delta(A \cup B) &= \delta\Big( (A -x) \cup (B -x ) \Big) \\&\leq \delta \Big( (A -x) + (B -x ) \Big) \\ 
&\leq \delta(A-x) + \delta(B-x) \\ &= \delta(A) + \delta(B).\end{align*}
Hence $(\Re^k,\delta)$ satisfies (D1), (D3) and (D4). 
\item By part 2., $(X,\delta)$ is a sublinear diversity. Then by \cite[Prop.\ 2.2b]{bryant2023diversities}, if $A, B$ are finite sets with $\conv(A)=\conv(B)$ then $\delta(A)=\delta(B)$.
\item We have $\conv(A \cup B) = \conv(B)$ so by 3., $\delta(B) = \delta(A \cup B) \geq \delta(A)$.
\item By (D5) we have $N(x+y) = \delta(\{0,x+y\}) \leq \delta(\{0,x\}) + \delta(\{0,y\}) = N(x) +N(y)$. If $\lambda\ge 0$ then $N(\lambda x) = \delta(\{0,\lambda x\}) = \lambda \delta(\{0,x\}) = |\lambda| N(x)$, while if $\lambda<0$ we have 
\[N(\lambda x) = \delta(\{\lambda x,0\}) = \delta(\{0,-\lambda x\}) = |\lambda| N(x).\]
Also, since $N(x)=\delta(\{0,x\})$, $N(x)=0$  if and only if $\delta(\{x,0\}) = 0$ if and only if $x=0$.
\item  Our proof is based on that of \cite[Theorem 4.1]{BrandenbergKonig13}. First suppose that $A \subseteq \Re^k$ satisfies 
\begin{equation}
    \sum_{a \in A} a = 0. \label{eq:zerosum}
\end{equation} For each $a \in A$ we then have $-a = \sum_{a' \in A \setminus \{a\}} a'$ and so 
\[ \frac{1}{|A|-1} (-a) = \sum_{a' \in A \setminus \{a\}} \frac{a'}{|A|-1} \in \conv(A \setminus \{a\}).\]
As well, we have for all $a' \neq a$ that  $a \in A \setminus \{a'\} \subseteq \conv(A \setminus \{a'\})$. Therefore
\begin{align*}
    \frac{(|A|-2)|A|}{|A|-1} a & = (|A|-1)a - \frac{1}{|A|-1}a \\
    & \in \sum_{a' \in A \setminus \{a\}} \conv(A \setminus \{a'\}) + \conv(A \setminus \{a\}) \\
    & = \sum_{a' \in A} \conv(A \setminus \{a'\}).
\end{align*}
This holds for all $a \in A$, so
\[  A \subseteq \frac{|A|-1}{(|A|-2)|A|}\sum_{a' \in A}\conv(A \setminus \{a'\}) = \conv\left(\frac{|A|-1}{(|A|-2)|A|}\sum_{a' \in A}(A \setminus \{a'\}) \right)\]
where we have used the identity $\conv(X+Y) = \conv(X) + \conv(Y)$, see \cite[Theorem~1.1.2]{Schneider14}. By part 4.~of the Proposition,
\begin{equation}\delta(A) \leq \mbox{ $\frac{|A|-1}{|A|(|A|-2)}$}\sum_{a \in A} \delta(A \setminus \{a\}). \label{eq:prop1_5}\end{equation}

Now suppose that  $\sum_{a \in A} a \neq 0$. Define $a_0 =  \frac{1}{|A|} \sum_{a \in A} a$ and  $A_0 = A - a_0$. Then $\sum_{a \in A_0} a = 0$ and from \eqref{eq:prop1_5} and translation invariance, 
\begin{align*}
    \delta(A) & = \delta(A_0) \\
    & \leq \mbox{ $\frac{|A_0|-1}{|A_0|(|A_0|-2)}$}\sum_{a \in A_0} \delta(A_0 \setminus \{a\}) \\
    & = \mbox{ $\frac{|A|-1}{|A|(|A|-2)}$}\sum_{a \in A} \delta((A-a_0) \setminus \{a-a_0\}) \\
    & = \mbox{ $\frac{|A|-1}{|A|(|A|-2)}$}\sum_{a \in A} \delta(A \setminus \{a\})
\end{align*}
as required.
\end{enumerate}
\end{proof}

We now show that the {\em zonotope diversity} introduced in Section~\ref{subsec:examples_of_lin_and_sublin} is in fact a sublinear diversity. The length of a zonotope equals its mean width (with the appropriate scaling) so for any two zonotopes $Z_1,Z_2 \subseteq \Re^k$ and $\alpha,\beta \geq 0$ we have that $Z_1 \subseteq Z_2$ implies $\ell(Z_1) \leq \ell(Z_2)$ and 
\[ \ell(\alpha Z_1 + \beta Z_2) = \alpha \ell(Z_1) + \beta \ell(Z_2).\]

The zonotope diversity of finite $A \subseteq \Re^k$ is 
\[\delta_z(A) = \inf\{\ell(Z): A \subseteq Z, \mbox{ $Z$ a zonotope}\}.\]
The induced metric of the zonotope diversity is the Euclidean metric.

\begin{proposition} \label{prop:zonotope}
The zonotope diversity $(\Re^k,\delta_z)$ is a sublinear diversity.
\end{proposition}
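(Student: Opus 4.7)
The plan is to apply Proposition~\ref{prop:sublinearProperties}, which reduces the task of showing $(\Re^k,\delta_z)$ is a sublinear diversity to verifying that $\delta_z$ satisfies (D1), monotonicity (D3), and sublinearity (D6); the diversity axioms (D2), (D4) then follow from part 2 of that proposition.

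Monotonicity is immediate: any zonotope enclosing $B$ also encloses $A$ whenever $A \subseteq B$, so the infimum defining $\delta_z(A)$ is taken over a larger family. For (D6), positive homogeneity is clear because a zonotope contains $\lambda A$ if and only if it has the form $\lambda Z$ for some zonotope $Z \supseteq A$, and scaling by $\lambda \geq 0$ rescales each constituent segment's length by $\lambda$. For subadditivity under Minkowski sum, I would pick, for any $\epsilon > 0$, zonotopes $Z_A \supseteq A$ and $Z_B \supseteq B$ with $\ell(Z_A) \leq \delta_z(A) + \epsilon/2$ and $\ell(Z_B) \leq \delta_z(B) + \epsilon/2$, and observe that $Z_A + Z_B$ is again a zonotope (a Minkowski sum of line segments is, by associativity, a Minkowski sum of line segments), that it contains $A+B$, and that its length is exactly $\ell(Z_A) + \ell(Z_B)$. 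Letting $\epsilon \to 0$ yields $\delta_z(A+B) \leq \delta_z(A) + \delta_z(B)$.

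The one step requiring a short argument is the nontrivial direction of (D1): that $\delta_z(A) > 0$ whenever $|A| \geq 2$. The key observation is that for any zonotope $Z = z_0 + \sum_i [0, v_i]$, any two points of $Z$ differ by a vector of the form $\sum_i \lambda_i v_i$ with $\lambda_i \in [-1,1]$, so $\mathrm{diam}(Z) \leq \sum_i \|v_i\| = \ell(Z)$ by the triangle inequality. Consequently any zonotope containing $A$ has length at least $\mathrm{diam}(A)$, giving $\delta_z(A) \geq \mathrm{diam}(A) > 0$ as soon as $A$ contains two distinct points. The zero-length (trivial) zonotope handles $|A| \leq 1$. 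I do not anticipate a serious obstacle; the main conceptual content is simply recognizing that the diameter lower bound is what prevents $\delta_z$ from vanishing on multi-point sets, and then Proposition~\ref{prop:sublinearProperties} completes the argument.
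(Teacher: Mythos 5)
Your proof is correct and follows essentially the same route as the paper: reduce to Proposition~\ref{prop:sublinearProperties} by checking (D1), (D3), and (D6), with the key point being that $Z_A + Z_B$ is a zonotope containing $A+B$ of length $\ell(Z_A)+\ell(Z_B)$. You actually supply two details the paper glosses over --- the $\epsilon$-argument avoiding the assumption that a minimum-length zonotope is attained, and the bound $\mathrm{diam}(Z) \leq \ell(Z)$ justifying strict positivity on multi-point sets --- both of which are correct.
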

\begin{proof}
Recall that $\delta_z(A)$ is the infimum of the lengths of a zonotopes containing $A$. The function $\delta_z(A)$ is clearly monotonic, vanishes when $|A| \leq 1$, and is strictly positive when $|A|>1$. 
Let $\epsilon >0$ be given. 
Given finite $A$ and $\lambda > 0$, let $Z_A$ be a zonotope containing $A$ with $\ell(Z_A)< \delta_z(A)+ \epsilon$. Then $\lambda Z_A$ (with length less than $\lambda \delta_z(A) + \lambda \epsilon$) contains $\lambda A$, so $\delta_z(\lambda A)\leq \lambda \delta_z(A) + \lambda \epsilon$. Letting $\epsilon$ go to zero gives $\delta_z(\lambda A) \leq \lambda \delta_z(A)$. The other direction ($\lambda \delta_z(A) \leq \delta_z(\lambda A))$ is obtained similarly and thus gives positive homogeneity. Given finite $A,B \subseteq \Re^k$ and $\epsilon>0$, let $Z_A$ and $Z_B$ be zonotopes containing $A$ and $B$ respectively, with $\ell(Z_A) \leq \delta_z(A) + \epsilon/2$ and $\ell(Z_B) \leq \delta_z(B) +  \epsilon/2$. Then  $Z_A + Z_B$ is a zonotope containing $A+B$ with length  $\ell(Z_A) + \ell(Z_B) \leq \delta_z(A) + \delta_z(B) + \epsilon$. Hence $\delta_z(A+B) \leq \delta_z(A) + \delta_z(B) +\epsilon$. Letting $\epsilon$ go to zero gives sublinearity. By Proposition~\ref{prop:sublinearProperties} part 2, $(\Re^k,\delta_z)$ is a sublinear diversity.  
\end{proof}

As an example, let $T$ be the three vertices of an equilateral triangle with side length $1$ and centroid at the origin (Fig.~\ref{fig:triangle}). Gr\"unbaum \cite[pg.~257]{grunbaum1963measures}, citing \cite{Kovetz1962SomeExtremalProblems}, states that a symmetric convex set containing the equilateral triangle has perimeter at least $2 \sqrt{3}$ and that this is bound realized by a hexagon with side length $\frac{1}{\sqrt{3}}$ (Fig.~\ref{fig:triangle}). As any symmetric convex polygon in $\Re^2$ is a zonotope, $\delta_z(T) = \delta_z(-T)=\sqrt{3}$.

\begin{figure}[ht]
\begin{center}
    \includegraphics[width=0.8\textwidth]{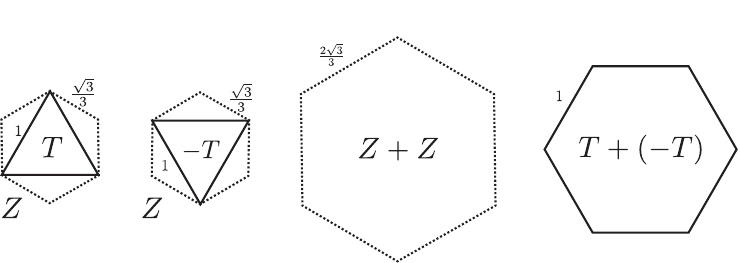}
\end{center}
\caption{\label{fig:triangle} An example illustrating that the zonotope diversity is not linear. $T$ is the set of vertices of an equilateral triangle, centred at the origin. $Z$ is the smallest zonotope containing $T$, which is the same as the smallest zonotope containing $-T$. The length of the convex hull of $T-T$ is $3$, which is smaller than $2 \sqrt{3}$, the length of $Z+Z$.  Hence $\delta_z(T-T) < \delta_z(T) + \delta_z(-T)$.}
\end{figure}

The convex hull of $T-T$ is the regular hexagon, centered at $0$, with side length $1$. This is already a zonotope, so $\delta_z(T - T) = 3$. We therefore have that $\delta_z(T + (-T)) < \delta_z(T) + \delta_z(-T)$, showing that  the zonotope diversity is not, in general, linear. \\

Let $\| \cdot \|$ be a norm on $\Re^k$ with associated metric $d(x,y) = \|x-y\|$ and unit ball $\sB = \{x : \|x\| \leq 1\}$. The {\em Hausdorff distance} between two nonempty closed bounded sets $K,L \subset \Re^k$ can be defined by \cite[p.\ 61]{Schneider14} : 
\[d_H(K,L) = \min \{\lambda: K \subseteq L + \lambda \sB \mbox{ and } L \subseteq K + \lambda \sB\}. \]
We extend the  function $\delta$ for a sublinear diversity $(X,\delta)$ from finite subsets to bounded $K \subseteq \Re^k$.  Define 
    \begin{equation} \delta^*(K) = \sup\{\delta(A) : A \subseteq K \mbox{ finite} \}. \label{eq:deltastar} \end{equation}

\begin{proposition} \label{prop:dstar}
Let $(\Re^k,\delta)$ be a sublinear semidiversity. 
\begin{enumerate}
\item For all bounded $K \subseteq \Re^k$, $\delta^*(K) < \infty$.
\item For all finite $A \subseteq \Re^k$,
\[ \delta^*(\conv(A)) = \delta(A). \]
\item For all bounded $K,L \subset \Re^k$ and $\lambda \geq 0$
\[ \delta^*(K+L) \leq \delta^*(K) + \delta^*(L) \]
and
\[ \delta^*(\lambda K) = \lambda \delta^*(K).\]
If $(\Re^k,\delta)$ is linear then
\[ \delta^*(K+L) = \delta^*(K) + \delta^*(L).\]
\item If $(\Re^k,\delta)$ is linear then the restriction of $\delta^*$ to the set of nonempty compact convex subsets of $\Re^k$ is a valuation. That is,
\[\delta^*(K \cap L) + \delta^*(K \cup L) = \delta^*(K) + \delta^*(L)\]
for all nonempty compact convex bodies $K,L$ such that $K \cap L$ and $K \cup L$ are non-empty and convex.
\item The restriction of $\delta^*$ to the set of nonempty compact convex subsets of $\Re^k$ is Lipschitz continuous with respect to the Hausdorff metric, with Lipschitz constant $\delta^*(\sB)$.
\end{enumerate}
\end{proposition}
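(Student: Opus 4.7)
My plan is to handle the five parts in an order that exploits their interdependencies, starting with the convex-hull identity in part 2, which turns out to be the linchpin. For part 2, given any finite $B \subseteq \conv(A)$, set $A' = A \cup B$; then $\conv(A') = \conv(A)$, so Proposition~\ref{prop:sublinearProperties}(3) gives $\delta(A') = \delta(A)$, while monotonicity (D3) yields $\delta(B) \leq \delta(A') = \delta(A)$. Taking the supremum over such $B$ gives $\delta^*(\conv(A)) \leq \delta(A)$, and the reverse inequality is immediate. Part 1 then follows at once: every bounded $K \subseteq \Re^k$ is contained in some $k$-simplex $\conv(V)$, so by part 2, $\delta^*(K) \leq \delta^*(\conv(V)) = \delta(V) < \infty$.

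For part 3, any finite $C \subseteq K+L$ admits a choice of $k_c \in K$, $l_c \in L$ with $c = k_c + l_c$; letting $A = \{k_c : c \in C\}$ and $B = \{l_c : c \in C\}$ gives $C \subseteq A+B$, so monotonicity together with (D6) yields $\delta(C) \leq \delta(A+B) \leq \delta(A) + \delta(B) \leq \delta^*(K) + \delta^*(L)$, and taking the sup over $C$ completes the subadditivity. The scaling identity $\delta^*(\lambda K) = \lambda \delta^*(K)$ follows directly from $\delta(\lambda A) = \lambda \delta(A)$. Part 5 is then an easy consequence: by definition of Hausdorff distance, $K \subseteq L + d_H(K,L)\sB$, so monotonicity and part 3 give $\delta^*(K) \leq \delta^*(L) + d_H(K,L)\delta^*(\sB)$; the symmetric bound gives Lipschitz continuity with constant $\delta^*(\sB)$, which is finite by part 1.

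Part 4 is the subtle one and requires two ingredients. First, linearity of $\delta$ upgrades part 3 to an equality: for any finite $A \subseteq K$ and $B \subseteq L$, $\delta^*(K+L) \geq \delta(A+B) = \delta(A)+\delta(B)$, so taking the sup over $B$ and then $A$ yields $\delta^*(K+L) \geq \delta^*(K)+\delta^*(L)$, and combined with part 3 gives equality. Second, I would invoke the classical Minkowski identity $(K\cap L) + (K \cup L) = K+L$ valid for compact convex $K,L$ with $K \cup L$ convex. Applying $\delta^*$ and using the (now exact) Minkowski additivity then gives the valuation identity $\delta^*(K \cap L) + \delta^*(K \cup L) = \delta^*(K) + \delta^*(L)$.

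The main obstacle I anticipate is establishing the Minkowski identity $(K \cap L)+(K \cup L) = K+L$. The inclusion $\supseteq$ is immediate (any $a \in K \cap L$, $b \in K \cup L$ trivially give $a + b \in K + L$), but the reverse requires producing, for each $a \in K$ and $b \in L$, a decomposition $a+b = p+q$ with $p \in K \cap L$ and $q \in K \cup L$. I would parameterize the segment by $p(t) = (1-t)a + tb$, which lies in $K \cup L$ by convexity of the union, and let $t^* = \sup\{t \in [0,1] : p(t) \in K\}$. Closedness of $K$ gives $p(t^*) \in K$, and for $t > t^*$ we have $p(t) \in L$, so by closedness of $L$ also $p(t^*) \in L$; hence $p(t^*) \in K \cap L$. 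A short case analysis on whether $1 - t^* \leq t^*$ shows $p(1 - t^*) \in K \cup L$, and the identity $a + b = p(t^*) + p(1 - t^*)$ then furnishes the required decomposition.
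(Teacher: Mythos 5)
Your proof is correct and follows essentially the same route as the paper: part 2 via $\conv(A\cup A')=\conv(A)$, part 3 by decomposing points of $K+L$, part 5 from the Hausdorff containments, and part 4 by upgrading subadditivity to Minkowski additivity under linearity and applying the identity $(K\cap L)+(K\cup L)=K+L$. The only difference is that the paper cites this last identity from Schneider (Lemma 1.3.1) while you prove it directly (your segment argument is valid, though you have the labels $\subseteq$ and $\supseteq$ swapped in describing which inclusion is the trivial one, and the final case analysis is unnecessary since convexity of $K\cup L$ already gives $p(1-t^*)\in K\cup L$).
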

\begin{proof}
\begin{enumerate}
\item
Suppose that $K \subseteq [-r,r]^k$ for some $r<\infty$. Let $e_1,\ldots,e_k$ be the standard basis for $\Re^k$ and define 
\[V = \sum_{i=1}^k \{-re_i,re_i\}\]
so $K \subseteq \conv(V) = [-r,r]^k$.
By Proposition~\ref{prop:sublinearProperties}, part 4., $\delta(A) \leq \delta(V)$ for all finite $A \subseteq K$ so 
\[\delta^*(K) \leq \delta(V) \leq \sum_{i=1}^k \delta(\{-re_i,re_i\}) < \infty.\] 
\item 
Let $A$ be a finite subset of $\Re^k$ and let $K = \conv(A)$. For any finite $A' \subseteq K$ we have $\conv(A \cup A') = \conv(A)$ so $\delta(A') \leq \delta(A \cup A') = \delta(A)$ by Proposition~\ref{prop:sublinearProperties} (3). 
Hence 
\[\delta(A) \leq \sup\{\delta(A'): \mbox{ finite } A' \subseteq K  \} \leq \delta(A).\]
\item 
Fix $\epsilon>0$ and suppose that $C$ is a finite subset of $K+L$ such that $\delta(C) > \delta^*(K+L) - \epsilon$. For each $c \in C$ there is $a_c \in K$ and $b_c \in L$ such that $c = a_c + b_c$. Let $A = \{a_c:c \in C\} \subseteq K$ and $B = \{b_c:c \in C\} \subseteq L$ so that $C \subseteq A+B$. It follows that 
\[\delta^*(K+L) - \epsilon < \delta(C) \leq \delta(A+B) \leq  \delta(A) + \delta(B) \leq \delta^*(K) + \delta^*(L).\]
Taking $\epsilon$ to zero gives the result. 

For any $K$ and $\alpha \geq 0$ we have
\[
\delta^*(\alpha K)= \sup_{\text{finite } A:\, A  \subseteq  \alpha K } \delta(A) = \sup_{\text{finite } B:\, \alpha B \subseteq  \alpha K } \delta(\alpha B) = \sup_{\text{finite } B:\, B \subseteq  K } \alpha \delta(B) = \alpha \delta^*(K)
\]
where we have used the change of variables $A=\alpha B$.

Suppose that $(\Re^k,\delta)$ is linear. Given $\epsilon>0$ there are finite $A \subseteq K$ and $B \subseteq L$ such  that 
\[\delta^*(K) < \delta(A) + \epsilon/2 \mbox{ and } \delta^*(L) < \delta(B) + \epsilon/2.\]
We then have 
\[\delta^*(K) + \delta^*(L) < \delta(A) + \delta(B) + \epsilon = \delta(A+B) + \epsilon \leq \delta^*(K+L) + \epsilon\]
as $A+B \subseteq K+L$. Taking $\epsilon \rightarrow 0$ and applying sublinearity gives
\[\delta^*(K + L) = \delta^*(K) + \delta^*(L).\]
\item By \cite[Lem.~3.1.1]{Schneider14} we have that if $K,L,K\cup L$ and $K\cap L$ are nonempty compact convex subsets then 
\[(K \cup L) + (K \cap L) = K+L. \]
By linearity, 
\[\delta^*(K \cup L) + \delta^*(K \cap L) = \delta^*(K) + \delta^*(L).\]
\item Suppose that $K,L$ are bounded nonempty subsets satisfying $d_H(K,L) = \lambda$. For any $\epsilon>0$ there is a finite $A \subseteq K$  such that $\delta(A) \leq \delta^*(K) < \delta(A) + \epsilon$. We also have $A \subseteq K \subseteq L + \lambda \sB$ so there are finite $B \subseteq L$ and $C \subseteq \sB$ such that $A \subseteq B + \lambda C$. Hence
\[ \delta^*(K) < \delta(A) + \epsilon \leq  \delta(B) + \lambda \delta(C) + \epsilon \leq \delta^*(L) + \lambda \delta^*(\sB) + \epsilon.\]
By a symmetric argument, 
\[ \delta^*(L) < \delta^*(K) + \lambda \delta^*(\sB) + \epsilon.\]
Taking $\epsilon$ to zero, we have
\[ |\delta^*(K) - \delta^*(L) | \leq \delta^*(\sB) d_H(K,L).\]

The bound is tight, as can be seen by letting $K = \sB$ and $L = 2 \sB$. Then $d_H(K,L) = 1$ and $|\delta^*(K) - \delta^*(L) | = \delta^*(\sB)$. 
\end{enumerate}
\end{proof}

Bryant et al.~\cite{bryant2023diversities} also describe  an extension of Minkowski diversities from finite sets to bounded sets. They define $\tdelta(P) = \delta(\mathrm{vert}(P))$ for any polytope with vertex set $\mathrm{vert}(P)$, and extend that to general bounded convex sets $K$ by defining $\tdelta(K) = \lim_{n \rightarrow \infty} \tdelta(P_n)$ for any sequence of polytopes $P_1,P_2,\ldots$ converging to $K$ under the Hausdorff metric. 

From Proposition~\ref{prop:dstar} part 5. we have that $\tdelta$ is well defined for any sublinear diversity. Proposition~\ref{prop:dstar} part 2. gives that $\delta^*(P) = \tdelta(P)$ for any polytope and from Proposition~\ref{prop:dstar} part 5 we have that $\delta^*(K) = \tdelta(K)$. Hence $\delta^*$ coincides with $\tdelta$ for sublinear diversities.

We conclude this section by showing that linear maps of linear (sublinear) semidiversities are linear (resp.~sublinear). 

\begin{proposition} \label{prop:linearMap}
    Let $\phi:\Re^m \rightarrow \Re^n$ be linear and let $(\Re^n,\delta_n)$ be a sublinear semidiversity. Then $(\Re^m,\delta_m)$ given by $\delta_m(A) = \delta_n(\phi(A))$ is a sublinear semidiversity. If $(\Re^n,\delta_n)$ is also linear then so is $(\Re^m,\delta_m)$.
\end{proposition}
\begin{proof}
    As defined, $\delta_m$ satisfies (D1$'$) and (D3). For finite $A,B \subseteq \Re^m$ and $\alpha,\beta \geq 0$ we have
    \[\delta_m(\alpha A + \beta B) = \delta_n\big(\phi(\alpha A + \beta B)\big) = \delta_n\big(\alpha \phi(A) + \beta \phi(B)\big).\]
    If $\delta_n$ is sublinear then we have
    \[\delta_n\big(\alpha \phi(A) + \beta \phi(B)\big) \leq \alpha \delta_n\big(\phi(A)\big) + \beta \delta_n\big(\phi(B)\big) = \alpha \delta_m(A) + \beta \delta_m(B) \]
so $(\Re^m,\delta_m)$ satisfies (D5) and is a sublinear diversity by Proposition~\ref{prop:sublinearProperties}. If $\delta_n$ is also linear then
 \[\delta_n\big(\alpha \phi(A) + \beta \phi(B)\big) = \alpha \delta_n\big(\phi(A)\big) + \beta \delta_n\big(\phi(B)\big) = \alpha \delta_m(A) + \beta \delta_m(B) \]
   and $(\Re^m,\delta_m)$ is linear.
\end{proof}

\subsection{Characterization of linear diversities} \label{subsec:linear_characterization}

The following characterization of linear diversities is essentially contained in the proof of the main theorem in Firey \cite{firey1976functional}. A clear statement of the relevant version of the Riesz representation theorem  may be found in \cite{rotem2021riesz}; see also \cite{meyer1995convex,hug2020lectures}.

\begin{theorem}\label{thm:linear_characterize}
Let $\delta$ be a function defined on finite subsets of $\Re^k$. Then $(\Re^k,\delta)$ is a linear semidiversity if and only if there is a positive finite Borel measure $\nu$ on the unit sphere $\bbS^{k-1} = \{x \in \Re^k: \|x\|_2 = 1\}$  such that 
\begin{equation} \label{eq:centroid}
    \int_{\bbS^{k-1}} x \, \mathrm{d}\nu(x) = 0
\end{equation} 
and
\begin{equation} 
\delta(A) = \int_{\bbS^{k-1}} h_A(x) \, \mathrm{d}\nu(x) \label{eq:linearInt} \end{equation}
for all finite $A \subseteq \Re^k$. Such a measure is unique.
\end{theorem}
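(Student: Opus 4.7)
The plan is to prove the two directions separately, using the extension $\delta^*$ from Proposition~\ref{prop:dstar} and standard facts about support functions.

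For the sufficiency direction, suppose $\nu$ is a positive finite Borel measure on $\bbS^{k-1}$ with $\int x\,d\nu(x) = 0$, and define $\delta(A) = \int h_A\,d\nu$. I would first check (D5) directly from $h_{\lambda A} = \lambda h_A$ and $h_{A+B} = h_A + h_B$. Then, using $h_{A+\{t\}}(x) = h_A(x) + t\cdot x$ together with the centroid condition, I get translation invariance. For (D1$'$), note $h_{\{a\}}(x) = a\cdot x$ is linear, so $\delta(\{a\}) = a\cdot \int x\,d\nu(x) = 0$; non-negativity of $\delta$ follows by translating a set to contain $0$, where $h_A \geq 0$ pointwise. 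Monotonicity (D3) is immediate from $A\subseteq B \Rightarrow h_A \leq h_B$. Then Proposition~\ref{prop:sublinearProperties}(2) packages (D1$'$), (D3), (D6) into a sublinear semidiversity, and (D5) upgrades it to linear.

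For necessity, let $\delta$ be a linear semidiversity and form $\delta^*$ via Proposition~\ref{prop:dstar}. Linearity of $\delta$ together with part~3 of that proposition (plus a matching lower bound $\delta(A)+\delta(B)=\delta(A+B)\leq \delta^*(K+L)$ over $A\subseteq K$, $B\subseteq L$) shows $\delta^*$ is Minkowski additive on convex bodies. I would then define a functional $T$ on the linear space $\{h_K - h_L : K,L \text{ convex bodies}\}\subseteq C(\bbS^{k-1})$ by $T(h_K - h_L) = \delta^*(K) - \delta^*(L)$; well-definedness follows from Minkowski cancellation (support functions determine convex bodies). The Lipschitz estimate in Proposition~\ref{prop:dstar}(5), combined with the classical identity $d_H(K,L) = \|h_K - h_L\|_{L^\infty(\bbS^{k-1})}$, gives $|T(f)|\leq \delta^*(\sB)\|f\|_\infty$. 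Invoking the classical density of differences of support functions in $C(\bbS^{k-1})$ (see Schneider \cite{Schneider14}, Lemma 1.7.8), I extend $T$ uniquely to a bounded linear functional on $C(\bbS^{k-1})$ and apply Riesz representation to obtain a signed Borel measure $\nu$ with $T(f) = \int f\,d\nu$. Then for finite $A$, $\delta(A) = \delta^*(\conv A) = T(h_A) = \int h_A\,d\nu$.

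The remaining task is to show $\nu\geq 0$ and that it satisfies the centroid condition. For positivity, given $f\geq 0$ in $C(\bbS^{k-1})$ I approximate uniformly by $g_n = h_{K_n} - h_{L_n}$ with $\|g_n - f\|_\infty < 1/n$; then $g_n + 1/n \geq 0$, and since $h_{K + r\sB}|_{\bbS^{k-1}} = h_K|_{\bbS^{k-1}} + r$, this equals $h_{K_n + (1/n)\sB} - h_{L_n}\geq 0$, forcing $L_n \subseteq K_n + (1/n)\sB$, whence monotonicity of $\delta^*$ gives $T(g_n + 1/n)\geq 0$; letting $n\to\infty$ yields $\int f\,d\nu\geq 0$. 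The centroid condition follows from applying $\delta$ to singletons: $0 = \delta(\{a\}) = \int a\cdot x\,d\nu(x) = a\cdot\int x\,d\nu(x)$ for every $a\in\Re^k$. Uniqueness is immediate: two such measures agree on $\{h_A\}$, hence on its linear span, hence (by density) on all of $C(\bbS^{k-1})$.

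The main technical obstacle is the boundedness/positivity machinery on $C(\bbS^{k-1})$ — specifically, confirming that differences of support functions are dense there, and the inflation-by-$(1/n)\sB$ trick for extracting positivity of $\nu$ from monotonicity of $\delta^*$ (rather than only from the partial information that $T$ is non-negative on cone-positive differences $h_K - h_L$ with $L\subseteq K$). Everything else reduces to routine manipulation of support function identities and the already-established properties of $\delta^*$.
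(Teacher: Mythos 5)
Your proof is correct, and its overall skeleton matches the paper's: the sufficiency direction is essentially identical (support-function identities plus Proposition~\ref{prop:sublinearProperties}), and the necessity direction likewise passes to $\delta^*$ on convex bodies via Proposition~\ref{prop:dstar}. The genuine difference is in how the representation of $\delta^*$ is obtained: the paper simply cites the proof of Firey's theorem \cite{firey1976functional} for the existence and uniqueness of $\nu$, whereas you inline that argument --- Minkowski additivity of $\delta^*$ (correctly supplemented by the superadditivity bound, since Proposition~\ref{prop:dstar}(3) only gives subadditivity), well-definedness of $T$ on differences $h_K - h_L$ via Minkowski cancellation, boundedness from the Lipschitz estimate, density of such differences in $C(\bbS^{k-1})$ (Schneider, Lemma~1.7.9 in the second edition), Riesz representation, and the inflation-by-$(1/n)\sB$ trick to upgrade monotonicity of $\delta^*$ to positivity of $\nu$. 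All of these steps check out; the positivity extraction in particular is exactly the delicate point that Firey's proof handles, and you handle it correctly. What your route buys is self-containedness (the paper's reader must consult Firey to see why monotonicity forces $\nu \geq 0$ and why $\nu$ is unique); what it costs is the extra machinery on $C(\bbS^{k-1})$. One small point to pin down: the identity $d_H(K,L) = \|h_K - h_L\|_{L^\infty(\bbS^{k-1})}$ requires $\sB$ to be the Euclidean unit ball, so you should fix that choice when invoking Proposition~\ref{prop:dstar}(5) (the paper allows a general norm ball there); with the Euclidean ball the constant is $\delta^*(\sB)$ as you state.
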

\begin{proof}
First we show that $\delta$ given by \eqref{eq:linearInt} is a linear semidiversity. For $a \in \Re^k$
\begin{align*}
\delta(\{a\}) & = \int_{\bbS^{k-1}} h_{\{a\}}(x) \, \mathrm{d}\nu(x) 
 = \int_{\bbS^{k-1}} a \cdot x \,  \mathrm{d}\nu(x) 
 = a \cdot \int_{\bbS^{k-1}}  x \, \mathrm{d}\nu(x) 
 = 0.
\end{align*}
For finite $A,B \subseteq \Re^k$ and $\lambda \geq 0$, $h_{\lambda A}=\lambda h_A$ and $h_{A+B} = h_A+h_B$ so $\delta(\lambda A)= \lambda \delta(A)$ and  $\delta(A+B) = \delta(A) + \delta(B)$. If $A \subseteq B$ then  $h_A(x) \leq h_B(x)$ for all $x \in \Re^k$, giving $\delta(A) \leq \delta(B)$ since $\nu$ is positive. By Proposition~\ref{prop:sublinearProperties}, part 2., 
$(\Re^k,\delta)$ is a linear semidiversity.

For the converse, let $(\Re^k,\delta)$ be a linear semidiversity and define $\delta^*$ as in \eqref{eq:deltastar}. 
By Proposition~\ref{prop:dstar} the restriction of $\delta^*$ to nonempty compact convex subsets is Minkowski linear and monotonic. 
The Lipschitz property in Proposition~\ref{prop:dstar} part 5 means we can use \cite[Thm.\ 1.2]{rotem2021riesz}
to obtain that
there is a unique positive finite Borel measure $\nu$ such that 
\[
\delta^*(K) = \int_{\bbS^{k-1}} h_K(x) \, \mathrm{d}\nu (x)
\]
for all compact convex sets $K$.
Note that for all $a \in \Re^k$ we have
\[
0= \delta^*(\{a\}) = \int_{\bbS^{k-1}} a \cdot x \, \mathrm{d}\nu (x) = a \cdot  \int_{\bbS^{k-1}}  \ x \, \mathrm{d}\nu (x)
\]
and so $\int_{\bbS^{k-1}}  \ x \, \mathrm{d}\nu (x)=0$.
 Now for any nonempty finite $A$, let $K = \conv(A)$. Then $\delta(A) = \delta^*(K)$ by Proposition~\ref{prop:dstar}, part 2., and $h_K = h_A$, so
\[\delta(A) = \delta^*(K) = \int_{\bbS^{k-1}} h_K(x) \, \mathrm{d}\nu (x) = \int_{\bbS^{k-1}} h_A(x) \, \mathrm{d}\nu(x), \]
as required.
\end{proof}

\begin{figure}[ht]
\centering
\includegraphics[width=0.8\textwidth]{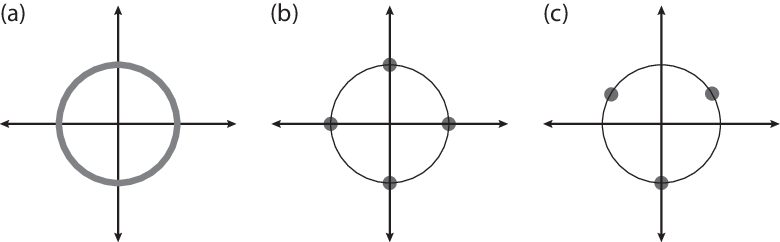}
\caption{\label{fig:linearMeasures} Support of measures corresponding to (a) mean width; (b) the $\ell_1$ diversity; and (c) a Minkowski diversity with a simplex kernel.
}
\end{figure}

In Figure~\ref{fig:linearMeasures} we depict the support for the measures corresponding to three different linear diversities:
\begin{enumerate}
    \item[(a)] The mean width diversity, where $\nu$ is the uniform Haar measure on the sphere.
    \item[(b)] The $\ell_1$ diversity, where $nu$ has point masses at $\pm e_i$, since 
\begin{align*}
    \delta(A) &= \int_{\bbS^{k-1}} h_A(x) \, \mathrm{d}\nu(x) 
     = \sum_{i=1}^k (h_A(e_i) + h_A(-e_i) )
     = \sum_{i=1}^k \max_{a,b \in A} |a_i - b_i|.
\end{align*}
 \item[(c)] The minimal case, in the sense that the support of $\nu$ is a set of affinely independent points, equivalently, the vertices of a simplex. No linear diversity has a smaller support. The corresponding diversities are characterized in the following Proposition.
 \end{enumerate}

\begin{proposition} \label{prop:linear_prog_simplex}
Let $v_0,\ldots,v_k \in \Re^k$ be affinely independent and suppose that $\sum_{i=0}^k c_i v_i =0$ for positive 
$c_0,\ldots,c_k$ such that $\sum_{i=0}^k c_i=1$. 
Define the polyhedron  $K = \bigcap_{i=0}^k \{y \in \Re^k:y \cdot v_i \leq 1 \}$. Let $\delta_K$ be the Minkowski diversity with kernel $K$.  
Then
\[
\delta_K(A) =  \sum_{i=0}^k c_i h_A(v_i)
\]
for all finite $A \subseteq \Re^k$. It follows that $(\Re^k,\delta_K)$ is a linear diversity.
\end{proposition}
\begin{proof}
Suppose that  $A = \{a_1,a_2,\ldots,a_m\}$. We express $\delta_K(A)$ as the solution to a linear program which is a special case of a more general formulation in \cite{BrandenbergRoth09}:
\begin{align*}
    \text{min}_{\lambda,x} \quad & \lambda\\
    \text{subject to} \quad 
    & \lambda + v_i \cdot x \geq v_i \cdot a_j \\ &\quad \mbox{ for all }i=0,1,\ldots,k \mbox{ and }j=1,\ldots,m.
\end{align*}
The dual linear program has  variables $y_{ij}$ for $i=0,1,\ldots,k$ and $j=1,\ldots,m$:
\begin{align*}
    \text{max}_y \quad & \sum_{i=0}^k \sum_{j=1}^m (v_i \cdot a_j) y_{ij} \\
    \text{subject to} \quad 
    & y_{ij} \geq 0, \mbox{ for all $i=0,\ldots,k$ and $j=1,\ldots,m$}\\
    & \sum_{i=0}^k \sum_{j=1}^m  y_{ij} = 1, \\
    & \sum_{i=0}^k \sum_{j=1}^m v_i y_{ij} = 0.
\end{align*}
Define $z_i = \sum_{j=1}^m y_{ij} \geq 0$ for all $i=0,\ldots,k$. Then our dual constraints are equivalent to 
\[
\sum_{i=0}^k z_i = 1, \ \ \ \sum_{i=0}^k z_i v_i = 0.
\]
Since the $v_\ell$ are affinely independent and  $\sum_{i=0}^k c_i v_i=0$, $\sum_{i=0}^k c_i=1$ we have $z_i=c_i$ for all $i=0,\ldots,k$ when $y$ is feasible.

For each $i=0,\ldots,k$ the maximum of $\sum_{j=1}^m (v_i \cdot a_j) y_{ij}$ such that $y_{i1},\ldots,y_{im}$ are non-negative and $z_i=\sum_{j=1}^m y_{ij} = c_i$ is obtained when $y_{ij^*} = c_i$ for $j^*$ maximizing $(v_i \cdot a_j)$ and $y_{ij}=0$ for $j \neq j^*$. This maximization can be carried out independently for each $i=0,\ldots,k$. Hence the optimal value for the dual and primal problems is 
\[
 \sum_{i=0}^k c_i \max_{a_j \in A} v_i \cdot a_j =  \sum_{i=0}^k c_i h_A(v_i),
\]
as required.
\end{proof}

\subsection{Characterization of sublinear diversities} \label{subsec:char_sublinear}

We now turn our attention to sublinear diversities. We will show that the relationship between sublinear and linear diversities parallels that between convex and affine functions. Just as every convex function is the supremum of affine functions, every sublinear diversity is the supremum of linear diversities (Theorem~\ref{thm:sublinearSup}). In fact, in our case, the supremum is attained for each set,  so the value of every sublinear diversity on a set is the maximum of the value of  a family of linear diversities on the set. Our proof 
relies heavily on the `Sandwich Theorem' (Theorem 1.2.5) of \cite{FuchssteinerLusky81convex}.

\begin{theorem} \label{thm:sublinearSup}
    Let $\delta$ be a function on finite subsets of $\Re^k$. 
    If $(\Re^k,\delta)$ is a sublinear diversity or semidiversity then there is a collection $\{(\Re^k,\delta_\gamma)\}_{\gamma \in \Gamma}$ of linear semidiversities such that 
    \[\delta(A) = \max\{\delta_\gamma(A):\gamma \in \Gamma\}.\]
Conversely, for any collection $\{(\Re^k,\delta_\gamma)\}_{\gamma \in \Gamma}$ of linear semidiversities and $\delta$ defined by $\delta(A) = \sup_{\gamma \in \Gamma} \delta_\gamma(A)$, the pair $(\Re^k, \delta)$ is a sublinear semidiversity.
\end{theorem}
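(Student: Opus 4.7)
The plan is to prove the two directions separately. The converse, that any pointwise supremum of linear semidiversities is a sublinear semidiversity, I would dispatch by direct verification: each $\delta_\gamma$ vanishes on singletons (so $\delta = \sup_\gamma \delta_\gamma$ satisfies (D1$'$)), each is monotone by Proposition~\ref{prop:sublinearProperties}, and for $\lambda \geq 0$ one has $\delta(\lambda A) = \lambda \delta(A)$ and $\delta(A+B) = \sup_\gamma(\delta_\gamma(A)+\delta_\gamma(B)) \leq \delta(A) + \delta(B)$, so (D6) holds. Proposition~\ref{prop:sublinearProperties} part 2 then certifies $(\Re^k,\delta)$ is a sublinear semidiversity.

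For the main direction, I aim to construct, for each nonempty finite $A_0 \subseteq \Re^k$, a linear semidiversity $\delta_{A_0}$ with $\delta_{A_0}(A) \leq \delta(A)$ for all finite $A$ and $\delta_{A_0}(A_0) = \delta(A_0)$; the family $\{\delta_{A_0}\}$ then witnesses $\delta$ as a pointwise maximum. By Proposition~\ref{prop:sublinearProperties} part 3, $\delta(A)$ depends only on $\conv(A)$, hence only on the support function $h_A$, so I would work on the convex cone $\mathcal{S}$ of support functions of nonempty compact convex subsets of $\Re^k$ (with pointwise addition and nonnegative scaling, preordered by pointwise inequality), defining $p(h_K) := \delta^*(K)$. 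By Proposition~\ref{prop:dstar}, $p$ is finite, monotone, and sublinear on $\mathcal{S}$.

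The construction of $\delta_{A_0}$ would then use the Sandwich Theorem of \cite{FuchssteinerLusky81convex}. I may assume $|A_0| \geq 2$, since for empty or singleton $A_0$ we have $\delta(A_0) = 0$ and the zero semidiversity suffices. Define a superlinear minorant on $\mathcal{S}$ by
\[
q(h) := \sup\{ \alpha\, p(h_{A_0}) : \alpha \geq 0,\ \alpha h_{A_0} \leq h \}.
\]
This $q$ is monotone and positively homogeneous; superadditivity follows because $\alpha h_{A_0} \leq h$ and $\beta h_{A_0} \leq h'$ imply $(\alpha+\beta) h_{A_0} \leq h+h'$; monotonicity of $p$ gives $q \leq p$; and taking $\alpha = 1$ gives $q(h_{A_0}) \geq p(h_{A_0})$, whence equality. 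Finiteness of $q$ on bounded support functions follows because $|A_0| \geq 2$ forces $h_{A_0}$ to be strictly positive somewhere on $\bbS^{k-1}$. The Sandwich Theorem then yields a monotone, additive, positively-homogeneous $\ell: \mathcal{S} \to \Re$ with $q \leq \ell \leq p$.

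Setting $\delta_{A_0}(A) := \ell(h_A)$, property (D5) is immediate from $h_{A+B} = h_A + h_B$, $h_{\lambda A} = \lambda h_A$ and the additivity and homogeneity of $\ell$. For singletons, $h_{\{a\}} + h_{\{-a\}} = h_{\{0\}} = 0$ together with $\ell(h_{\{a\}}), \ell(h_{\{-a\}}) \leq 0$ (from $\ell \leq p$ and $p$ vanishing on singletons) and $\ell(h_{\{a\}}) + \ell(h_{\{-a\}}) = \ell(0) = 0$ forces both to vanish, verifying (D1$'$). Thus $\delta_{A_0}$ is a linear semidiversity with $\delta_{A_0}(A_0) = \ell(h_{A_0}) = p(h_{A_0}) = \delta(A_0)$ and $\delta_{A_0}(A) \leq p(h_A) = \delta(A)$ for every finite $A$. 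The main obstacle will be checking the hypotheses of the Sandwich Theorem --- particularly identifying a superlinear minorant $q$ that is simultaneously finite, monotone, dominated by $p$, and tight at $h_{A_0}$; once this setup is in hand, translating back from support functions to set functions is routine bookkeeping.
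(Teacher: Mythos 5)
Your proposal follows essentially the same route as the paper's proof: both pass to the cone of support functions, define $p$ from $\delta$ (the paper uses $p(h_A)=\delta(A)$ on support functions of finite sets, you use $\delta^*$ on compact convex bodies), construct for each finite set a superlinear minorant that is tight there, and invoke the Fuchssteiner--Lusky Sandwich Theorem to produce a monotone linear functional sandwiched between the two, which pulls back to a linear semidiversity touching $\delta$ at the chosen set. The one substantive difference is that your minorant $q$ omits the translation that the paper builds into $q_B$ (via $\lambda B + x \subseteq \conv(A)$), so your feasible set can be empty and $q$ takes the value $-\infty$ on, e.g., singleton support functions; this is still admissible for the $\overline{\Re}$-valued Sandwich Theorem as quoted, and your additivity argument $\ell(h_{\{a\}})+\ell(h_{\{-a\}})=\ell(0)=0$ correctly recovers vanishing on singletons (where the paper instead uses $\mu_B \geq \delta(B)q_B \geq 0$), so the argument goes through.
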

\begin{proof}
Suppose that $\{(\Re^k,\delta_\gamma)\}_{\gamma \in \Gamma}$ are linear semidiversities and 
 \[\delta(A) = \sup\{\delta_\gamma(A):\gamma \in \Gamma\}\]
    for all finite $A \subseteq \Re^k$. 
Note that $\delta$ vanishes on singletons and is monotonic since each $\delta_\gamma$ has these properties. 
Suppose that $A,B$ are finite subsets of $\Re^k$ and $\lambda \geq 0$. Then 
\[ \delta(\lambda A) = \sup\{\delta_\gamma(\lambda A):\gamma \in \Gamma\} = \sup\{\lambda \delta_\gamma(A):\gamma \in \Gamma\} = \lambda \delta(A)\]
and
\[\delta(A+B) = \sup\{\delta_\gamma(A+B):\gamma \in \Gamma\} = \sup\{\delta_\gamma(A) + \delta_\gamma(B):\gamma \in \Gamma\} \leq \delta(A) + \delta(B),\]
so $(\Re^k,\delta)$ is sublinear.
By Proposition~\ref{prop:sublinearProperties}, $(\Re^k,\delta)$ is a sublinear semidiversity.

For the converse, suppose that $(\Re^k,\delta)$ is sublinear. Define $\sH$ to be the set of all support functions $h_A$ for nonempty finite $A \subseteq \Re^k$. Define $p$ on the convex cone $\sH$ by $p(h_A) = \delta(A)$ for all finite sets $A$. By Proposition~\ref{prop:sublinearProperties} part 3, $\delta(A)$ only depends on $\conv(A)$ so $p$ is well-defined on $\sH$. The function $p$ is sublinear as
for any finite $A,B$,
\[
p(h_A + h_B)= p(h_{A+B}) = \delta(A + B) \leq \delta(A) + \delta(B) = p(h_A)+p(h_B),
\]
and $p(\lambda h_A)=p(h_{\lambda A}) = \delta(\lambda A) = \lambda \delta(A) =\lambda p(h_A)$ for $\lambda \geq 0$.

Fix finite $B \subseteq \Re^k$. 
Define $q_B$ on $\sH$ by 
\[q_B(h_A) = \sup\{\lambda: \lambda B +x \subseteq \conv(A) \mbox{ for some $x \in \Re^k$}\}\]
with $q_B(h_A) = 0$ if $|A|=1$. When $|A| \geq 2$,  $q_B(h_A)$ is the largest we can scale $B$ so that a translate is contained in $\conv(A)$.
Note that $q_B(h_B)=1$ and hence $p(h_B)=\delta(B) = \delta(B) q_B(h_B)$.

We show that $q_B$ is superlinear. For all $\alpha \geq 0$ we have $q_B(\alpha h_A) = q_B(h_{\alpha A}) = \alpha q_B(h_A)$. 

Let $A_1,A_2$ be finite, non-empty subsets of $\Re^k$. If $|A_1|=1$ then $A_1+A_2$ is a translate of $A_1$, so 
\[q_B(A_1+A_2) = q_B(A_2) = q_B(A_1) + q_B(A_2).\]
Likewise, if $|A_2|=1$. 

Suppose that $|A_1|>1$ and $|A_2|>1$.
Given $\epsilon>0$ there are $\lambda_1 > q_B(h_{A_1}) - \epsilon/2$,  $\lambda_2 > q_B(h_{A_2}) - \epsilon/2$, $x_1,x_2 \in \Re^k$ such that 
\begin{align*}
    \lambda_1 B + x_1 &\subseteq \conv(A_1) \\
    \lambda_2 B + x_2 &\subseteq \conv(A_2) 
    \intertext{ and hence} 
    (\lambda_1 + \lambda_2) B + (x_1 + x_2) & \subseteq \conv(A_1) + \conv(A_2) \\
    & = \conv(A_1 + A_2),
\end{align*}
so that $q_B(h_{A_1 + A_2}) \geq (\lambda_1 + \lambda_2) > q_B(h_{A_1}) + q_B(h_{A_2}) - \epsilon$. Taking $\epsilon \rightarrow 0$ gives superlinearity. 

We now have that $p$ is monotonic and sublinear and that $q_B$ is superlinear. Again we have that for any finite $A \subseteq \Re^k$ and $\epsilon > 0$ there is $\lambda$ such that $q_B(h_A) - \epsilon < \lambda \leq q_B(h_A)$ and $x \in \Re^k$ with 
$\lambda B + x \subseteq \conv (A)$, and so 
\begin{align*}
    p(h_A) & \geq p(h_{\lambda B})
         = \lambda p(h_B) 
          > (q_B(h_A) - \epsilon) \delta(B).
\end{align*}
Taking $\epsilon \rightarrow 0$ we conclude that $q_B(h_A) \delta(B) \leq p(h_A)$ for all $h_A \in \sH$.  Recall that $q_B(h_B) \delta(B)= p(h_B)$.

For $h_A,h_B \in \sH$ we write $h_A \preceq h_B$ if $h_A(x) \leq h_B(x)$ for all $x$, which, as $A$ and $B$ are convex bodies, holds exactly when $A \subseteq B$. 
The set $\sH$ is a convex cone which, together with the partial order $\preceq$, satisfies the many axioms of a {\em pre-ordered cone} (see \cite{FuchssteinerLusky81convex} for details). For each finite $B$ we have now satisfied the conditions for Theorem 1.2.5 of \cite{FuchssteinerLusky81convex}:
\begin{quotation} \em
    \noindent Let $F$ be a pre-ordered cone and let $p:F \rightarrow \overline{\Re}$ be monotone and sublinear, $q:F \rightarrow \overline{\Re}$ superlinear with $q \leq p$. Then there is a monotone linear $\mu:F \rightarrow \overline{\Re}$ with $q \leq \mu \leq p$.
\end{quotation}
In our example $F$ is the cone $\sH$ of support functions of finite sets. Let $q(h)=\delta(B) q_B(h)$. Let $\mu_B \colon \sH \rightarrow \Re$ be the linear map given by the theorem. It is monotone, linear, and 
\[
q(h) = \delta(B) q_B(h) \leq \mu_B(h) \leq p(h).
\]
Since by definition $p(h_{\{a\}})=\delta(\{a\})=0$ for all $a \in \Re^k$, we have $\mu_B(h_{\{a\}})=0$ for all $a \in \Re^k$. 

Now define $\delta_B$ by $\delta_B(A)=\mu_B(h_A)$ for all finite $A$. Then $\delta_B$ vanishes on singletons, it is monotone, linear, and hence also sublinear. By Proposition~\ref{prop:sublinearProperties}, $(\Re^k,\delta_B)$ is a linear semidiversity.

Because $\delta(B) q_B(h_B)=p(h_B)$, we have that 
\[\delta_B(B)=\mu_B(h_B) = \delta(B) q_B(h_B)=\delta(B)\]
and for general finite $A$ we have 
\[\delta_B(A) =\mu_B(h_A) \leq p(h_A) = \delta(A).\]
Repeating this process for all finite $B \subseteq \Re^k$ we obtain a set of linear semidiversities $\{\delta_B\}_{\text{finite } B \subseteq \Re^k}$ such that $\delta_B \leq \delta$ and $\delta_B(B) = \delta(B)$ for all finite $B  \subseteq \Re^k$. So for all finite $A \subseteq \Re^k$,
\[\delta(A) = \sup\{\delta_B(A) : \mbox{ finite } B \subseteq \Re^k \} = \max\{\delta_B(A) : \mbox{ finite } B \subseteq \Re^k \}, \] 
since the supremum is actually attained when $B=A$.
\end{proof}

\section{Embedding into linear and sublinear diversities} \label{sec:embedding}

We now turn our attention from linear and sublinear semidiversities to characterizations  of when finite semidiversities can be  
embedded into linear or sublinear diversities. Questions about embedding of metric spaces have, of course, been central to metric geometry and its applications, particularly after Linial et al.\ \cite{LinialLondonEtal95} demonstrated the link between approximate embeddings and combinatorial optimization algorithms on graphs. We showed in \cite{BryantTupper14} that an analogous link holds between approximate embeddings of diversities and combinatorial optimization algorithms on hypergraphs. This has been used by \cite{jozefiak2023diversity} to obtain the best current approximation bounds for sparsest cut on some classes on hypergraphs. Here we only consider embeddings without distortion, that is, exact rather than approximate embeddings.

A map $f:X_1 \to X_2$ between two semidiversities $(X_1,\delta_1)$ and $(X_2,\delta_2)$ is an {\em embedding} if $\delta_2(f(A)) = \delta_1(A)$ for all finite $A \subseteq X_1$. Since $f$ is not required to be injective, a semidiversity can be embedded into a diversity. We say that a finite semidiversity $(X,\delta)$ is {\em linear-embeddable} if there is an embedding from $(X,\delta)$ to a linear diversity on $\Re^k$ for some $k$ and {\em sublinear-embeddable} if there is an embedding to some sublinear diversity on $\mathbb{R}^k$, for some $k$. At this stage we allow the dimension $k$ to be arbitrary. 

Theorem~\ref{thm:linearEmbed} gives a characterization of linear-embeddability while Theorem~\ref{thm:sublinearEmbed} gives a characterization of sublinear-embeddability. Minkowski diversities and diversities of negative type were reviewed earlier.  

Before we state our characterization of linear-embeddable semidiversities we need to revisit diversities of negative type, extending the results of \cite{WuBryantEtal19} to semidiversities.
Recall that a finite semidiversity has negative type if it satisfies $\sum_{A,B} x_A x_B\, \delta(A \cup B) \leq 0$ for all vectors $x$ indexed by the subsets of $X$ with $x_\emptyset = 0$ and $\sum_A x_A = 0$. 

\begin{proposition} \label{prop:negtype_properties}
Let $(X,\delta)$ be a finite semidiversity. Then $(X,\delta)$ has negative type if and only if there is an embedding of $(X,\delta)$ into a finite diversity of negative type.
\end{proposition}
\begin{proof}
Let $\phi:X \rightarrow Y$ be an embedding of $(X,\delta)$ into a negative type diversity $(Y,\td)$. 
Let $x$ be a vector indexed by subsets of $X$ such that $x_\emptyset = 0$ and $\sum_{A \subseteq X} x_A = 0$. Let $\tilde{x}$ be a vector indexed by subsets of $Y$ with $\tilde{x}_I = \sum_{A:\phi(A) = I} x_A$ for $I \subseteq \phi(X)$ and  $\tilde{x}_I = 0$ for $I \not \subseteq \phi(X)$. Then $\sum_{I \subseteq X} \tilde{x}_I = \sum_{A \subseteq X} x_A = 0$ and $\tilde{x}_\emptyset = 0$, and
    \begin{align*}
        \sum_{A,B \subseteq X} x_A x_B \delta(A \cup B) & = \sum_{I,J \subseteq Y} \sum_{A:\phi(A) = I} \sum_{B:\phi(B) = J} x_A x_B 
        \, \td(\phi(A \cup B)) \\
        & = \sum_{I,J \subseteq Y} \left(\sum_{A : \phi(A) = I} x_A \right) \left(\sum_{B : \phi(B) = J} x_B \right) \td(I \cup J) \\
        & =  \sum_{I,J \subseteq X} \tilde{x}_I \tilde{x}_J \td(I \cup J)  \\
        & \leq 0,
    \end{align*}
    since  $(Y,\td)$ has negative type. Hence $(X,\delta)$ has negative type.

  For the converse, suppose that $(X,\delta)$ is a semidiversity of negative type. Define the equivalence relation $\sim$ on $X$ by $a \sim b \Leftrightarrow \delta(\{a,b\})=0$. Let $Y$ be a subset of $X$ intersecting each equivalence class of $\sim$ in exactly one point and let $\phi:X \rightarrow Y$ be the map with $\phi(a) = b \Leftrightarrow b \in Y \mbox{ and }a \sim b$. Let $\td$ be the restriction of $\delta$ to $Y$. We claim that $(Y,\td)$ has negative type and that $\phi$ is an embedding. 
 
Let $y$ be a zero-sum vector, indexed by subsets of $Y$, such that $y_\emptyset = 0$. Define a vector $x$, indexed by subsets of $X$, where
\[x_A = \begin{cases} y_A & A \subseteq Y \\ 0 & \mbox{otherwise.} \end{cases}\]
Then $x$ has zero sum, $x_\emptyset = 0$ and 
\[\sum_{A,B \subseteq Y} y_A y_B \td(A \cup B) = \sum_{A,B \subseteq X} x_A x_B \delta(A \cup B) \leq 0.\]
Hence $(Y,\td)$ is a diversity with negative type.

Suppose that $A \subseteq X$. Then 
\[\delta(A) \leq \delta(A \cup \phi(A)) \leq \delta(\phi(A)) + \sum_{a \in A} \delta(\{a,\phi(a)\}) = \delta(\phi(A)) = \td(\phi(A))\]
and 
\[\td(\phi(A)) = \delta(\phi(A)) \leq \delta(\phi(A) \cup A) \leq \delta(A) + \sum_{a \in A} \delta(\{a,\phi(a)\}) = \delta(A),\]
in both cases using that fact that $\phi(a) =\tilde a$. Hence $\phi$ is an embedding from the semidiversity $(X,\delta)$ to a diversity $(Y,\td)$ of negative type.
\end{proof}

\begin{theorem} \label{thm:linearEmbed}
Let $(X,\delta)$ be a finite semidiversity. The following are equivalent:
\begin{enumerate}
\item[(i)]$(X,\delta)$  is linear-embeddable.
\item[(ii)] $(X,\delta)$ is of negative type.
\item[(iii)] $(X,\delta)$ can be embedded into a Minkowski diversity $(\Re^k,\delta_K)$ with kernel equal to a  simplex $K \subseteq \Re^k$.
\end{enumerate}
\end{theorem}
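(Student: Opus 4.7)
The plan is to prove the cycle (iii) $\Rightarrow$ (i) $\Rightarrow$ (ii) $\Rightarrow$ (iii). The first two directions are routine; I split the last into (i) $\Rightarrow$ (iii), which is a packaging argument, and (ii) $\Rightarrow$ (i), which is the genuine substance of the theorem.

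For (iii) $\Rightarrow$ (i), Theorem~\ref{thm:extremal_linear} already identifies a Minkowski semidiversity with simplex kernel as an extremal linear semidiversity, hence linear. For (i) $\Rightarrow$ (ii), I would invoke Theorem~\ref{thm:linear_characterize} to write the ambient linear diversity as $\eta(C) = \int h_C(y)\,\mathrm{d}\nu(y)$ with $\int y\,\mathrm{d}\nu=0$. Using $h_{A\cup B} = \max(h_A,h_B) = \tfrac12(h_A+h_B+|h_A-h_B|)$ and the zero-sum hypothesis to cancel the linear part, one reduces to
\[
\sum_{A,B} x_A x_B \delta(A\cup B) \;=\; \tfrac12\int \sum_{A,B} x_A x_B \bigl|h_{f(A)}(y)-h_{f(B)}(y)\bigr|\,\mathrm{d}\nu(y).
\]
The inner sum is pointwise $\le 0$ from the classical identity $|a-b|=\int_\Re(\bone_{t<a}-\bone_{t<b})^2\,\mathrm{d}t$, which after zero-sum cancellation turns $\sum x_A x_B|\alpha_A-\alpha_B|$ into $-2\int\bigl(\sum_{A:\alpha_A>t} x_A\bigr)^2\,\mathrm{d}t\le 0$; integrating against $\nu$ preserves the sign.

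For (i) $\Rightarrow$ (iii), Theorem~\ref{linearSimplex} expresses the ambient linear diversity as a convex combination of extremal linear semidiversities, and Carath\'eodory in the finite-dimensional cone $\Re^{2^{|X|}-1}$ reduces this to a finite sum $\delta=\sum_i\alpha_i\delta_i$ with each $\delta_i$ simplex-Minkowski embeddable by Lemma~\ref{lem:extremal_means_simplex_minkowski}. I then amalgamate the finitely many simplex embeddings into a single one by lifting: if the combined extremal directions are $\{v_{i,\ell}\}$ with weights $c_{i,\ell}>0$ and $\sum c_{i,\ell}v_{i,\ell}=0$, I replace them by $\tilde v_{i,\ell}=e_{i,\ell}-\bar c$ in $\Re^N$ (with $\bar c=\sum c_{i,\ell}e_{i,\ell}/\sum c_{i,\ell}$); these are affinely independent, lie in the hyperplane $\{y:\bone\cdot y=0\}$, and still satisfy $\sum c_{i,\ell}\tilde v_{i,\ell}=0$. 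The original centroid condition makes the linear system $\tilde v_{i,\ell}\cdot\tilde f(x)=v_{i,\ell}\cdot f(x)$ consistent, so the target $\tilde f\colon X\to\Re^{N-1}$ exists, and Lemma~\ref{lem:linear_prog_simplex} identifies the resulting diversity as $\delta_K$ for a simplex $K\subset\Re^{N-1}$.

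The remaining implication (ii) $\Rightarrow$ (i) is the main obstacle. Both $L$ (linear-embeddable) and $N$ (negative-type) are closed convex cones in $\Re^{2^{|X|}-1}$ with $L\subseteq N$, and the goal is the reverse inclusion. My plan is a duality argument. The cone $N^*$ is the closed conic hull of the quadratic functionals $\phi_x\colon\delta\mapsto -\sum_{A,B} x_A x_B\delta(A\cup B)$ over zero-sum $x$ with $x_\emptyset=0$; the cone $L^*$ consists of $\phi$ with $\sum_\ell c_\ell\sum_A\phi_A\max_{x\in A}u_\ell(x)\ge 0$ whenever $c_\ell>0$ and $\sum_\ell c_\ell u_\ell\equiv 0$ on $X$. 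The hard step is showing any such $\phi$ lies in the closed conic hull of the $\phi_x$; I expect this to require either an explicit combinatorial decomposition across the chambers of the arrangement $\{u(x_i)=u(x_j)\}$ or a separation argument in the spirit of the Fuchssteiner--Lusky sandwich theorem already used in the proof of Theorem~\ref{thm:sublinearSup}.
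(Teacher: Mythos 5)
Your proposal does not close the cycle: the implication (ii)$\Rightarrow$(i) --- which you yourself identify as ``the genuine substance of the theorem'' --- is left as a plan rather than a proof. The duality scheme you outline requires (a) that the cone $L$ of linear-embeddable ``diversity vectors'' on $X$ is closed in $\Re^{2^{|X|}-1}$ (this needs a compactness/normalization argument for the generating set of restricted extremal semidiversities, which range over configurations in $\Re^k$ for all $k$), (b) the asserted description of $L^*$, and (c) the containment $L^*\subseteq N^*$, i.e.\ that every functional nonnegative on $L$ lies in the closed conic hull of the quadratics $\phi_x$. None of these is established, and (c) is the entire difficulty; saying it should follow from ``an explicit combinatorial decomposition across chambers'' or ``a separation argument in the spirit of the sandwich theorem'' is a research direction, not an argument. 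For comparison, the paper does not prove this direction from scratch either: it is exactly the equivalence (ii)$\Leftrightarrow$(iii), which is Theorem 17 of \cite{bryant2023diversities} (a finite diversity has negative type if and only if it embeds in a Minkowski diversity with simplex kernel), and the paper simply cites that result. With that citation, your own (iii)$\Rightarrow$(i) would finish the proof; without it, what you have established is only (i)$\Leftrightarrow$(iii) together with (i)$\Rightarrow$(ii).

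The implications you do prove are correct and in places cleaner than the paper's. Your (i)$\Rightarrow$(ii) is a genuinely different argument: the paper decomposes the ambient linear diversity into extremal pieces via Theorem~\ref{linearSimplex} and Lemma~\ref{lem:extremal_means_simplex_minkowski} and then cites negative type of simplex Minkowski diversities, whereas you argue directly from Theorem~\ref{thm:linear_characterize} using $h_{f(A)\cup f(B)}=\max\bigl(h_{f(A)},h_{f(B)}\bigr)$, the identity $\max(a,b)=\tfrac12(a+b+|a-b|)$, the zero-sum cancellation of the linear part, and the one-dimensional negative-type identity $|a-b|=\int_{\Re}(\bone_{t<a}-\bone_{t<b})^2\,\mathrm{d}t$; this is self-contained and correct. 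Your (i)$\Rightarrow$(iii) amalgamation --- lifting the pooled directions to $\tilde v_{i,\ell}=e_{i,\ell}-\bar c$, checking affine independence and the centroid condition, solving the consistent linear system for $\tilde f$, and invoking Lemma~\ref{lem:linear_prog_simplex} --- is also sound and effectively reproves part of the cited Theorem 17, though the Carath\'eodory step needs a sentence justifying that the restriction of $\delta$ to subsets of $X$, being the barycenter of a probability measure on restrictions of extremal semidiversities, lies in their convex hull. These are worthwhile improvements, but they do not substitute for the missing implication from negative type back to embeddability.
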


\begin{proof}
(i)$\Rightarrow$(ii). Without loss of generality, let $X \subset \Re^k$ where $(\Re^k,\delta)$ is a linear semidiversity. \cite[Sec. 3]{WuBryantEtal19} define the diversity of negative type $(\Re^\ell,\deltaneg)$ where 
\[
\deltaneg (A) = \sum_{i=1}^\ell \max_{a \in A} a_i - \min_{a \in A} \left\{ \sum_{i=1}^\ell a_i \right\}.
\]
We will determine an embedding $\phi \colon X \rightarrow \Re^\ell$ such that
\[
\delta(A) =  \sum_{i=1}^\ell \max_{a \in A} \phi(a)_i \ \ \ \  \text{and} \ \ \ \  \ \  \sum_{i=1}^\ell \phi_i(x)=0,
\]
thereby showing that $(X,\delta)$ is embeddable in a diversity of negative type, and is therefore of negative type by Proposition~\ref{prop:negtype_properties}.

By Theorem~\ref{thm:linear_characterize}, $\delta$ is given by
    \[
    \delta(A) = \int_{\mathbb{S}^{k-1}} h_A(u) \, \mathrm{d}\nu(u)
    \]
    for a positive measure $\nu$ on the sphere satisfying $\int_{\mathbb{S}^{k-1}} u \, \mathrm{d} \nu(u) = 0$. 
    Every $u \in \mathbb{S}^{k-1}$ induces an ordering $\sigma \colon  \{1,\ldots,n\} \rightarrow X$ of the points in $X$ so that
    \begin{equation} \label{eq:sigma_ordering}
    u^T x_{\sigma(1)} \leq \cdots \leq u^T x_{\sigma(n)}.
    \end{equation}
    We denote the order for a given $u$ by $\sigma_u$ (breaking ties using the lexicographic order). Let the set of all $u$ such that $\sigma_u =\sigma$ be denoted $S_\sigma$.
If we integrate \eqref{eq:sigma_ordering} over $S_\sigma$ with respect to $\nu$ we obtain
  \begin{equation} \label{eq:sigma_ordering_integrated}
    u_\sigma^T x_{\sigma(1)} \leq \cdots \leq u_\sigma^T x_{\sigma(n)}
    \end{equation}
where $u_\sigma= \int_{S_\sigma} u \, d\nu(u)$. So for a given set $A$, and for all $u \in S_\sigma$, the $a \in A$ that maximizes $u^T a$ is the same as the $a$ that maximizes $u_\sigma^T a$. Let us denote this value $a$ by $m_{A,\sigma}$.
    

    For any set $A \subseteq X$ we have
    \[
    \delta(A) =   \int_{\mathbb{S}^{k-1}} h_A(u) d\nu(u) = \sum_{\sigma} \int_{S_\sigma} h_A(u) d\nu(u) =  \sum_{\sigma} \int_{S_\sigma}  u^T m_{A,\sigma} \,  d\nu(u) = \sum_{\sigma}   u_\sigma^T m_{A,\sigma} = \sum_\sigma \max_{a \in A} u_\sigma^T a.
    \]
   
We define a map $\phi \colon X \rightarrow \Re^{\Sigma}$ where $\Sigma$ is the set of all permutations of members of $X$. We define
\(
\phi(a)_\sigma = u_\sigma^T a 
\)
for $a \in X$. So 
\(
\delta (A) = \sum_{\sigma} \max_{a \in A} \phi(a)_\sigma
\)

Now we observe that for all $a \in X$
\[
\sum_{\sigma} \phi(a)_\sigma =  \sum_{\sigma}  u_\sigma^T a 
 =   \sum_{\sigma} \int_{S_\sigma} u^T a d\nu(u) 
 =   \left(  \int_{\mathbb{S}^{k-1}} u d \nu \right)^T a = 0^T a = 0.
\]
So $(X,\delta)$ is embeddable in $(\Re^{|\Sigma|},\deltaneg)$ which is a  diversity of negative type, and therefore $(X,\delta)$ is a semidiversity of negative type.

(ii) $\Rightarrow $(iii). By Theorem~5.2 of \cite{bryant2023diversities}, any diversity of negative type can be embedded into a Minkowski diversity with a simplex kernel. By Proposition~\ref{prop:negtype_properties} this also holds for semidiversities of negative type.

(iii) $\Rightarrow$ (i) By Theorem~5.2 of \cite{bryant2023diversities}, a Minkowski diversity with simplex kernel  is linear, so $(X,\delta)$ is linear-embeddable. 
\end{proof}

\begin{theorem} \label{thm:sublinearEmbed}
Let $(X,\delta)$ be a finite semidiversity. The following are equivalent:
\begin{enumerate}
\item[(i)]$(X,\delta)$  is sublinear-embeddable.
\item[(ii)] There is a finite collection $\{(X,\delta_\gamma):\gamma \in \Gamma\}$ of semidiversities of negative type such that 
\[\delta(A) = \max\{\delta_\gamma: \gamma \in \Gamma\}\]
for all $A \subseteq X$.
\item[(iii)] $(X,\delta)$ can be embedded into a Minkowski diversity.
\end{enumerate}
\end{theorem}

\begin{proof}
(i)$\Rightarrow$(ii) 
We may assume $X \subseteq \Re^k$ where $(\Re^k,\delta)$ is a sublinear diversity. By Theorem~\ref{thm:sublinearSup} there is a family of linear semidiversities $\delta_\gamma$ for $\gamma \in \Gamma$ such that $\delta(A) = \max \delta_\gamma(A)$. 
By Theorem \ref{thm:linearEmbed}, each of $(X,\delta_\gamma)$ is of negative type, and therefore $(X,\delta)$ is the maximum of a collection of negative type  semidiversities. For each subset $B$ of $X$ there is a $\gamma_B$ such that $\delta_{\gamma_B}(B)=\delta(B)$ and $\delta_{\gamma}(B) \leq \delta(B)$ for all other $\gamma$. Hence $\delta(A) = \max_{B \subseteq X} \delta_{\gamma_B} (A)$.
Since $X$ has only finitely many subsets, $\delta$ can be expressed as the maximum of a finite number of $\delta_\gamma$ where $(X,\delta_\gamma)$ are of negative type. \\
(ii)$\Rightarrow$(iii)  Suppose $(X,\delta)$ is the maximum of a finite collection of negative type semidiversities, $\delta_\gamma$ for $\gamma \in \Gamma$. 
By Theorem~\ref{thm:linearEmbed}, each $(X,\delta_\gamma)$ can then be embedded into a Minkowski diversity.  
By Proposition 4.1 (a) in \cite{bryant2023diversities} the maximum of a finite collection of Minkowski embeddable diversities is Minkowski embeddable. Since $\Gamma$ is finite, the the diversity given by 
\[\delta(A) = \max\{\delta_\gamma(A):\gamma \in \Gamma\} \]
can also be embedded in a Minkowski diversity.\\
(iii)$\Rightarrow$(i) If $(X,\delta)$ is embeddable into a Minkowski diversity, then it is sublinear-embeddable, since by  \cite[Thm.~2.4]{bryant2023diversities} all Minkowski diversities are sublinear. 
\end{proof}

\bibliographystyle{abbrv}
\bibliography{diversity}

\end{document}